%
%
%
\documentclass[11pt]{amsart}

\usepackage{amsmath, amsthm, amssymb, euscript, enumerate}
\usepackage{pxfonts, graphicx, subfigure, epsfig}
\usepackage{mathtools, ulem, appendix, accents, color}

\usepackage{amssymb,mathrsfs,graphicx,enumerate}


\newcommand{\cL}{{\mathcal L}}

\theoremstyle{plain}
\newtheorem{theorem}{Theorem}[section]
\newtheorem{definition}{Definition}[section]

\newtheorem{corollary}[theorem]{Corollary}

\newtheorem{lemma}[theorem]{Lemma}
\newtheorem{proposition}[theorem]{Proposition}

\numberwithin{equation}{section}


\title{Improved energy decay estimate for Dir-stationary $Q$-valued functions and its applications}

\author{Sanghoon Lee}
\address{Department of Mathematics, Princeton University, Princeton, NJ 08544, USA }
\email{sl29@math.princeton.edu }
\begin{document}

\date{\today}


\maketitle
\begin{abstract}
In this paper, we establish an improved decay estimate for the Dirichlet energy of Dir-stationary $Q$-valued functions. As a direct application of this estimate, we derive a Liouville-type theorem for bounded Dir-stationary $Q$-valued functions defined on $\mathbb{R}^m$. Additionally, in an attempt to establish the continuity of Dir-stationary $Q$-valued functions, we confirm that such functions exhibit the Lebesgue property at every point within their domain. Finally, we observe that the improved decay estimate implicates that Dir-stationary $Q$-valued functions reside within a generalized Campanato-Morrey space.

\end{abstract}

\section{Introduction}

The theory of multivalued functions, also known as $Q$-valued functions, plays a fundamental role in Almgren's celebrated paper \cite{A} where he established the regularity theorem for mass-minimizing integral currents. It is considered that solving regularity problems of $Q$-valued Dirichlet-minimizing functions is the first step towards exploring those of mass-minimizing integral currents as they are the appropriate linearized problems.  In light of this perspective, it is natural to commence the investigation of the regularity of stationary currents by examining the regularity properties of Dir-stationary $Q$-valued functions.

Recent extensive research in this area, such as \cite{1, 2, 3, 4, 5, 6}, has been based on the modern revision of Almgren's original theory by De Lellis and Spadaro \cite{DS}. For notations and derivations of basic formulas that we will use in this paper, we direct the reader to \cite[Sections 2 and 3]{DS}. The following is the definition of Dir-stationary $Q$-valued functions.

\begin{definition} \label{def:stationary}
Let $m, n$ be positive integers and $\Omega \subset \mathbb{R}^m$ be an open domain. $ f \in W^{1,2}(\Omega, \mathcal{A}_Q ( \mathbb{R}^n ))$ is a Dir-stationary $Q$-valued function if following properties hold:

(1) (the inner variation formula) For every $\phi \in C_c^{\infty}(\Omega , \mathbb{R}^m)$,  
\begin{equation*}
2\int \sum_i \langle Df_i : Df_i \cdot D\phi \rangle - \int |Df|^2 \mathrm{div} \phi = 0 
\end{equation*}

(2) (the outer variation formula) For every $\psi \in C^{\infty} (\Omega_x \times \mathbb{R}^n , \mathbb{R}^n ) $ s.t. $\mathrm{supp}(\psi) \subset \Omega' \times \mathbb{R}^n$ for some $\Omega' \Subset \Omega$, $|D_u \psi| \le C < \infty$ and $|\psi| + |D_x \psi| \le C(1+|u|)$, 
\begin{equation*}
\int \sum_i \langle Df_i : D_x \psi (x, f_i (x) ) \rangle + \int \sum_i \langle Df_i : D_u \psi (x, f_i(x)) \cdot Df_i(x) \rangle = 0
\end{equation*}
\end{definition}

Lin \cite{L} established the continuity of Dir-stationary $Q$-valued functions for 2-dimensional cases, while more recently, Hirsch and Spolaor \cite{HS} provided a simpler and intrinsic proof for the H\"{o}lder continuity of Dir-stationary $Q$-valued functions in the same dimension. However, for dimensions greater than 2, no prior results exist.

The difficulty in investigating the regularity of Dir-stationary $Q$-valued functions arises from the challenge of improving the energy decay estimate, easily obtained from variational formulas, as shown below:
\begin{equation}\label{eq:basicdecay}
\int_{B_r} |Df|^2 dx \le r^{m-2} \int_{B_1} |Df|^2.
\end{equation}
Unfortunately, this estimate is not sufficiently strong to apply the Campanato-Morrey estimate. In the case of Dir-minimizing $Q$-valued functions, an improved decay rate for $\int_{B_r} |Df|^2 dx$ is proved by constructing suitable competitors for $f$, as shown in the following proposition:
\begin{proposition}\cite[Proposition 3.10]{DS}
Let $f \in W^{1,2} ( B_1 , \mathcal{A}_Q(\mathbb{R}^n)$ be Dir-minimizing. Then, $\int_{B_r} |Df|^2 dx \le r^{m-2+C(m)} \int_{B_1} |Df|^2$. $C(m)$ is a constant depending only on $m$.
\end{proposition}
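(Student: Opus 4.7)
The plan is to exploit the Dir-minimality of $f$ via comparison with carefully chosen competitors on each ball $B_r$, improving the naive competitor bound by a dimension-dependent, strictly positive amount. For almost every $r \in (0,1)$ let $g := f|_{\partial B_r}$ denote the trace of $f$, which lies in $W^{1,2}(\partial B_r, \cA_Q(\R^n))$ with tangential energy bounded by $D'(r)$, where $D(r) := \int_{B_r}|Df|^2$. The natural candidate competitor is the $0$-homogeneous extension $\bar h(\rho\omega) := g(r\omega)$ for $\rho \in (0,r)$ and $\omega \in \partial B_1$. A direct computation in polar coordinates yields
\[
\int_{B_r} |D\bar h|^2 \,=\, \frac{r}{m-2}\int_{\partial B_r}|D_\tau g|^2 \, d\sigma,
\]
so Dir-minimality alone would only give $D(r) \le \frac{r}{m-2}D'(r)$, integrating to the basic decay \eqref{eq:basicdecay} and nothing more.

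The heart of the argument is therefore to produce a strictly better competitor $h$ with $h|_{\partial B_r} = g$ satisfying
\[
\int_{B_r} |Dh|^2 \,\le\, \frac{r}{m-2+C(m)}\int_{\partial B_r} |D_\tau g|^2 \, d\sigma
\]
for some $C(m) > 0$. The available gain reflects the fact that $0$-homogeneous $\cA_Q$-valued functions are essentially never Dir-minimizers, so one can always strictly decrease the energy of $\bar h$ inside $B_r$. A natural strategy is to decompose $g$ orthogonally into its barycenter $\eta\circ g$, a single-valued $\R^n$-valued map on $\partial B_r$, and its dispersion about this barycenter; one then extends the barycenter by its own harmonic extension (enjoying the sharp solid-spherical-harmonic decay) and the dispersion by a radially modified homogeneous construction. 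Summing the two contributions produces an overall bound with constant strictly less than $1/(m-2)$, with the gap $C(m) > 0$ originating from the harmonic mode of the barycenter.

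Combining Dir-minimality with such an improved competitor yields the differential inequality $D(r) \le \frac{r}{m-2+C(m)}D'(r)$. Setting $\beta := m-2+C(m)$, this rewrites as $(r^{-\beta}D(r))' \ge 0$, so $r^{-\beta}D(r)$ is non-decreasing on $(0,1)$ and hence $D(r) \le r^{\beta}D(1)$, which is the desired estimate. The principal obstacle is constructing the improved extension: because $\cA_Q$-valued functions on the sphere may have branch points, one cannot simply split $g$ into smooth single-valued sheets and harmonically extend each one. Isolating the barycenter (a single-valued, genuinely harmonic object) from the $\cA_Q$-valued dispersion, and controlling the latter via $Q$-valued Poincar\'e-type inequalities on $\partial B_r$, is the key technical device and determines the explicit form of $C(m)$.
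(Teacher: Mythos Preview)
The paper under review does not actually prove this proposition: it is quoted verbatim as \cite[Proposition 3.10]{DS} in the introduction, purely as background motivating the contrast with the Dir-stationary case, and no argument is supplied. There is therefore no ``paper's own proof'' to compare your attempt against; what you are really sketching is the strategy of the original source, De Lellis--Spadaro.

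Viewed as a sketch of that argument, your outline has the correct skeleton --- compare $f$ on each $B_r$ with a competitor sharing the trace $g$, deduce the differential inequality $(m-2+C)D(r)\le rD'(r)$, then integrate --- but the competitor you describe does not by itself deliver the gain. Splitting $g$ into its barycenter $\eta\circ g$ and its dispersion, then extending the barycenter harmonically and the dispersion by a (radially modified) $0$-homogeneous map, improves on the cone only when the barycenter carries a definite fraction of the tangential energy. If the sheets of $g$ are well separated on $\partial B_r$, essentially all of $\int_{\partial B_r}|D_\tau g|^2$ sits in the dispersion, your competitor degenerates to the $0$-homogeneous extension, and no improvement results. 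In \cite{DS} this is resolved by a dichotomy and an induction on $Q$: either the sheets are close enough that the barycenter dominates and the harmonic extension wins, or they split into well-separated groups of strictly smaller multiplicities, each of which is extended using the inductive hypothesis and then glued via an interpolation lemma. Your appeal to ``$Q$-valued Poincar\'e-type inequalities on $\partial B_r$'' does not replace this inductive step, and without it the argument stalls precisely in the separated-sheets regime. (For $m=2$ the $0$-homogeneous extension has infinite energy and the argument in \cite{DS} is entirely different.)
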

However, for Dir-stationary $Q$-valued functions, it is not possible to compare the Dirichlet energy of $f$ with that of its competitors. Nonetheless, we have succeeded in improving the estimate (\ref{eq:basicdecay}) in a different form, which is detailed below:

\begin{proposition}\label{lemma:decay'}
Let $f \in W^{1,2}(\Omega, \mathcal{A}_Q(\mathbb{R}^n))$ be a Dir-stationary $Q$-valued function. For any $x \in \Omega$ and $r < \frac{1}{2}\min (\mathrm{dist}(x, \partial \Omega), 1)$, we have 
\begin{equation} \label{improveddecay}
\frac{1}{r^{m-2}} \int_{B_r(x)} |Df|^2 \le \frac{C}{\log \frac{1}{r}}.
\end{equation}
 Here, $C$ depends on $\mathrm{dist}(x, \partial \Omega)$, $m, Q, ||f||_{W^{1,2}(\Omega, \mathcal{A}_Q ( \mathbb{R}^n ))}$.
\end{proposition}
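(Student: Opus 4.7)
The plan is to combine the inner and outer variation formulas with a subharmonicity observation to reduce the estimate to a first-order ODE for $g(r) := r^{2-m} \int_{B_r(x)} |Df|^2$.

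First I would use the inner variation with the vector field $\phi(y) = \zeta(|y-x|)(y-x)$ to derive the monotonicity identity $g'(r) = 2 r^{2-m} F(r)$ with $F(r) := \int_{\partial B_r(x)} |\partial_\rho f|^2$, and the outer variation with $\psi(y,u) = \zeta(|y-x|)(u-u_0)$ for a constant $u_0 \in \mathbb{R}^n$ to obtain, after sending $\zeta \to \chi_{B_r(x)}$, the Pohozaev-type identity
\[
  D(r) := \int_{B_r(x)} |Df|^2 = \int_{\partial B_r(x)} \sum_i \langle \partial_\rho f_i,\, f_i - u_0 \rangle.
\]
Applying Cauchy--Schwarz to the right-hand side and substituting into the monotonicity formula yields the differential inequality
\[
  g'(r) \ge \frac{2\, r^{m-2}\, g(r)^2}{H_{u_0}(r)}, \qquad H_{u_0}(r) := \int_{\partial B_r(x)} \sum_i |f_i - u_0|^2.
\]

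The key step is then to choose $u_0$ so that $H_{u_0}(r) \le C r^{m-1}$. I would first observe that the outer variation with $\psi(y,u) = \zeta(y)\, v$ for an arbitrary constant $v \in \mathbb{R}^n$ forces the arithmetic mean $\eta := \frac{1}{Q}\sum_i f_i$ to be weakly harmonic on $\Omega$, hence $C^\infty$ and locally bounded. Next, applying the outer variation with the $x$-dependent test function $\psi(y,u) = \zeta(y)(u - \eta(y))$ (which is admissible thanks to the smoothness and linear growth of $\eta$), and using $\sum_i (f_i - \eta) \equiv 0$ to cancel the cross terms, I would obtain the distributional identity
\[
  \tfrac{1}{2}\,\Delta\!\biggl(\sum_i |f_i - \eta|^2\biggr) = \sum_i |D(f_i - \eta)|^2 \ge 0,
\]
so the variance function $\mathrm{var} := \sum_i |f_i - \eta|^2$ is subharmonic on $\Omega$. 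The mean value inequality combined with the crude bound $\int_\Omega \mathrm{var} \le \|f\|_{L^2(\Omega)}^2$ then yields a locally uniform estimate $\|\mathrm{var}\|_{L^\infty(B_{R_0}(x))} \le C$, where $R_0 := \tfrac{1}{2}\mathrm{dist}(x,\partial\Omega)$ and $C$ depends on $R_0$, $m$, $Q$, $\|f\|_{W^{1,2}}$.

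Finally, taking $u_0 := \eta(x)$, the $L^\infty$ bound on $\mathrm{var}$ together with the standard interior Lipschitz estimate $|\eta(y) - \eta(x)| \le C|y-x|$ for the harmonic function $\eta$ gives $H_{\eta(x)}(r) \le C(r^{m-1} + r^{m+1}) \le C r^{m-1}$ for all $r \le R_0$. The differential inequality then becomes $g'(r) \ge c\, g(r)^2/r$, which integrates directly to $1/g(r) - 1/g(R_0) \ge c \log(R_0/r)$ and rearranges to the desired logarithmic decay $g(r) \le C/\log(1/r)$. The main technical obstacle I anticipate is the subharmonicity step: it requires careful manipulation of the outer variation formula with a target shift that depends nontrivially on $x$ through the harmonic $\eta$, and one must verify that the associated test function $\psi(y,u) = \zeta(y)(u-\eta(y))$ satisfies the admissibility hypotheses in Definition 1.1(2). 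Once this is in place, the remaining steps reduce to routine elliptic estimates and ODE integration.
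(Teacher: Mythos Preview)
Your argument is correct and follows the same skeleton as the paper's: both derive the monotonicity formula from the inner variation, the Pohozaev-type identity from the outer variation, combine them via Cauchy--Schwarz to reach the differential inequality $g'(r)\ge c\,g(r)^2/r$, and integrate. The only substantive difference is how the height term $H_{u_0}(r)$ is controlled. The paper takes $u_0=0$ and uses directly that $\sum_i |f_i|^2$ is weakly subharmonic (their Proposition~2.3), yielding the local bound $|f|\le M$ and hence $H_0(r)\le C r^{m-1}$. You instead center at $u_0=\eta(x)$, prove subharmonicity of the variance $\sum_i|f_i-\eta|^2$, and add a Lipschitz estimate for the harmonic mean $\eta$. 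This works, and your admissibility check for $\psi(y,u)=\zeta(y)(u-\eta(y))$ is fine, but the detour is unnecessary: since $\sum_i|f_i-\eta|^2=\sum_i|f_i|^2-Q|\eta|^2$, your variance subharmonicity is in fact an immediate consequence of the paper's simpler observation, and choosing $u_0=0$ avoids invoking interior gradient estimates for $\eta$ altogether.
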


The first direct application of the improved estimate is the Louville-type theorem for bounded Dir-stationary $Q$-valued functions.
\begin{theorem}
Let $f \in W^{1,2}_{\mathrm{loc}}(\mathbb{R}^m, \mathcal{A}_Q(\mathbb{R}^n))$ be a bounded Dir-stationary $Q$-valued function. Then, $f$ is a constant function.
\end{theorem}
It's worth noting that, to the best of the author's knowledge, this method offers a new proof for the classical Liouville theorem for bounded harmonic functions on Euclidean spaces as a specific case. This proof does not rely on the mean-value property.

The second application allows us to establish the Lebesgue property of Dir-stationary $Q$-valued functions, which is a step towards establishing their continuity.
\begin{theorem} \label{thm:main2}
Let $f \in W^{1,2}(\Omega, \mathcal{A}_Q ( \mathbb{R}^n ))$ be a Dir-stationary $Q$-valued function. Then, any $x \in \Omega$ is a Lebesgue point of $f$.
\end{theorem}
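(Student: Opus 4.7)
My plan is to prove Theorem~\ref{thm:main2} by combining the improved decay of Proposition~\ref{lemma:decay'} with the $Q$-valued Poincar\'e inequality of De Lellis--Spadaro, and then extracting a pointwise Lebesgue value via compactness in $\mathcal{A}_Q(\mathbb{R}^n)$.

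First, for each small $r>0$ and fixed $x \in \Omega$, let $T_{x,r} \in \mathcal{A}_Q(\mathbb{R}^n)$ minimize the functional $T \mapsto |B_r(x)|^{-1}\int_{B_r(x)} \mathcal{G}(f(y), T)^2 \, dy$; existence follows from continuity on the complete metric space $(\mathcal{A}_Q, \mathcal{G})$ and compactness of sublevel sets via the Almgren bi-Lipschitz embedding $\xi\colon \mathcal{A}_Q \hookrightarrow \mathbb{R}^N$. The Poincar\'e inequality for $W^{1,2}$ $Q$-valued maps furnishes a competitor $\tilde T_r$ obeying $|B_r|^{-1}\int_{B_r}\mathcal{G}(f, \tilde T_r)^2 \le C r^2 |B_r|^{-1}\int_{B_r}|Df|^2$, so by minimality and Proposition~\ref{lemma:decay'},
\begin{equation*}
\omega(x,r) := \frac{1}{|B_r(x)|}\int_{B_r(x)}\mathcal{G}(f, T_{x,r})^2 \;\le\; C \, r^{2-m} \int_{B_r(x)} |Df|^2 \;\le\; \frac{C}{\log(1/r)} \;\longrightarrow\; 0.
\end{equation*}

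To produce the Lebesgue value, I use that the family $\{T_{x,r}\}_{r<r_0}$ is bounded in $\mathcal{A}_Q$ (controlled by $\|f\|_{L^2(B_{r_0})}$), hence admits subsequential limits as $r \to 0$ through the Almgren embedding. Selecting a dyadic sequence $r_j = 2^{-j}r_0$ along which $T_{x,r_j} \to f^*(x)$, for any $r \in [r_{j+1}, r_j]$ the triangle inequality together with $|B_{r_j}|/|B_r| \le 2^m$ gives
\begin{equation*}
\frac{1}{|B_r(x)|}\int_{B_r(x)}\mathcal{G}(f, f^*(x))^2 \;\le\; 2^{m+1}\, \omega(x, r_j) + 2\,\mathcal{G}(T_{x,r_j}, f^*(x))^2 \;\longrightarrow\; 0,
\end{equation*}
which certifies $f^*(x)$ as a Lebesgue value. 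Uniqueness is then automatic: any other Lebesgue value $g^*$ at $x$ satisfies $\mathcal{G}(f^*(x), g^*)^2 \le 2|B_r|^{-1}\int_{B_r}[\mathcal{G}(f^*, f)^2 + \mathcal{G}(f, g^*)^2] \to 0$, so $f^*(x) = g^*$.

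The main obstacle I anticipate is upgrading the subsequential convergence $T_{x, 2^{-j_k}} \to f^*(x)$ to convergence along the \emph{full} dyadic sequence, so that the interpolation above covers every small $r$. The naive telescoping bound $\mathcal{G}(T_{x, 2^{-j}}, T_{x, 2^{-j-1}}) \lesssim 1/\sqrt{j}$ is only borderline non-summable, and a direct Cauchy argument therefore fails. I would circumvent this by invoking Dir-stationary structure beyond the energy estimate itself: applying the outer variation with $\psi(x,u) = \phi(x)\, e_k$ shows that $\eta \circ f$ is \emph{harmonic}, hence smooth, which forces $\eta(T_{x,r}) \to (\eta \circ f)(x)$ and pins every subsequential limit to the common barycenter $(\eta \circ f)(x)$. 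The remaining ``spread'' coordinate of $T_{x,r}$ I would control using the subharmonicity of $\sum_i |f_i|^2$ (another consequence of outer variation), which together with the vanishing of $\omega(x,r)$ should force the spread to converge as well, thereby identifying $f^*(x)$ unambiguously and completing the proof.
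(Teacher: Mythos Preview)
Your setup and identification of the obstacle are both correct, and the idea to pin the barycenter via harmonicity of $\eta\circ f$ is exactly what the paper also uses for the $k=1$ moment. The genuine gap is in the last step: controlling only the scalar $\sum_i |f_i|^2$ in addition to the barycenter does \emph{not} determine a point of $\mathcal{A}_Q(\mathbb{R}^n)$. For instance, with $Q=2$, $n=2$, the two $Q$-points $[\![(1,0)]\!]+[\![(-1,0)]\!]$ and $[\![(0,1)]\!]+[\![(0,-1)]\!]$ share the same barycenter $(0,0)$ and the same value of $\sum_i|f_i|^2=2$, yet are distinct in $\mathcal{A}_2(\mathbb{R}^2)$. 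So two subsequential limits of $T_{x,r}$ could satisfy both of your constraints and still differ; the ``spread'' argument as stated cannot close.

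What the paper does to close this gap is the algebraic upgrade you are missing: for \emph{each} coordinate direction (more precisely each $e_h$ in the Almgren frame $\Lambda$) and for \emph{every} integer $k\ge 1$, the outer variation with $\psi^q(x,u)=\eta(x)\,\delta_{pq}(u^q)^{k-1}$ yields $\Delta\bigl(\sum_i (f_i\cdot e_h)^k\bigr)=k(k-1)\sum_i|D(f_i\cdot e_h)|^2(f_i\cdot e_h)^{k-2}\ge 0$ in the weak sense (after translating so that $f_i\cdot e_h\ge 0$). Hence each power sum $\sum_i(f_i\cdot e_h)^k$ has a well-defined limit of averages by the subharmonic mean-value monotonicity. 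The decay $\omega(x,r)\to 0$ then forces the $k$-th power sum of any subsequential limit to equal this common value. Since all power sums of a multiset of reals determine the multiset, and this holds for every $e_h\in\Lambda$, the embedding $\xi$ forces all subsequential limits to coincide. In short, your plan is on the right track but needs the full family of subharmonic functions $\sum_i(f_i\cdot e_h)^k$, not just $k=2$, together with the Newton--power-sum identification.
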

This proof is inspired by an algebraic fact stating that the roots of a polynomial depend continuously on its coefficients. As a corollary, we can deduce that the set of discontinuities of a Dir-stationary $Q$-valued function is a meager set.

Finally, we would like to point out that the estimate (\ref{improveddecay}) indicates that Dir-stationary $Q$-valued functions belong to a generalized Campanato-Morrey space.  For more details about generalized Campanato-Morrey spaces, the readers are referred to \cite{Ha} and the references therein. We also observe that an improvement of (\ref{improveddecay}) would yield the continuity of Dir-stationary $Q$-valued functions.

Now, we briefly outline the structure of this paper. In Section 2, we  review the necessary basic materials from \cite{DS} and prove the local boundedness of Dir-stationary $Q$-valued functions. In Section 3, we prove the improved decay estimate Proposition \ref{lemma:decay'}  and use it to prove our main theorems. In Section 4, we introduce generalized Campanato-Morrey spaces, where  Dir-stationary $Q$-valued functions reside, and demonstrate how further enhancement of the estimate established in Section 3 could suffice to prove the continuity of  Dir-stationary $Q$-valued functions.

\section*{Acknowledgement}
The author would like to thank Professor Camillo De Lellis for his encouragement and for introducing the author to this wonderful subject.

\section{Preliminaries}

We review some fundamental facts from \cite{DS}. Follwing the same argument as for Dir-minimizing $Q$-valued functions case, we obtain:

\begin{proposition} \label{lemma:frequency} \cite[Proposition 3.2]{DS}
Let $ f \in W^{1,2}(\Omega, \mathcal{A}_Q ( \mathbb{R}^n ))$ be a Dir-stationary $Q$-valued function and $x \in \Omega$. Then following identities hold for a.e.  $0<r<dist (x, \partial \Omega)$.
\begin{equation} \label{eq:frequency1}
(m-2) \int_{B_r(x)} |Df|^2 = r\int_{\partial B_r(x)} |Df|^2 - 2r \int_{\partial B_r(x)} \sum_i |\partial_{\nu} f_i | ^2
\end{equation}
\begin{equation} \label{eq:frequency2}
\int_{B_r(x)} |Df|^2 = \int_{\partial B_r(x)} \sum_i \langle \partial_{\nu} f_i, f_i \rangle
\end{equation}
where $\nu$ is the outward normal vector.
\end{proposition}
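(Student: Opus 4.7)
The plan is to derive both identities by plugging carefully chosen test fields into the inner and outer variation formulas from Definition \ref{def:stationary}, then letting a cutoff converge to the indicator of $B_r(x)$.

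For identity (\ref{eq:frequency1}), I would apply the inner variation formula to the radial vector field $\phi(y) = \chi_\e(|y-x|)(y-x)$, where $\chi_\e \in C_c^\infty([0,\infty))$ is a smooth cutoff approximating $\mathbf{1}_{[0,r]}$ (suitably supported inside $\Omega$). Setting $z = y-x$, a direct computation gives
\begin{equation*}
D\phi(y) = \chi_\e(|z|)\, I + \chi_\e'(|z|)\, \frac{z\otimes z}{|z|}, \qquad \mathrm{div}\,\phi = m\chi_\e(|z|) + |z|\chi_\e'(|z|).
\end{equation*}
Contracting with $Df_i$ yields $\langle Df_i : Df_i \cdot D\phi \rangle = \chi_\e\,|Df_i|^2 + \chi_\e'\,|z|\,|\partial_\nu f_i|^2$, where $\partial_\nu = \frac{z}{|z|}\cdot\nabla$ is the radial derivative at $x$. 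Substituting into the inner variation formula and collecting terms gives
\begin{equation*}
(2-m)\!\int |Df|^2 \chi_\e + \int |z|\,\chi_\e'\Big(2\sum_i |\partial_\nu f_i|^2 - |Df|^2\Big) = 0.
\end{equation*}
Choosing $\chi_\e$ so that $\chi_\e \to \mathbf{1}_{[0,r]}$ and $-\chi_\e'$ tends weakly (as a measure) to $\delta_r$, and invoking the coarea formula to interpret the limiting spherical integrals, produces exactly (\ref{eq:frequency1}) for every $r$ at which the boundary traces are well defined, i.e., for a.e. $r$.

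For identity (\ref{eq:frequency2}), I would apply the outer variation formula to $\psi(y,u) = \chi_\e(|y-x|)\,u$. This $\psi$ is admissible in the sense of Definition \ref{def:stationary}(2): indeed $|D_u\psi| = \chi_\e \le C$ and $|\psi| + |D_x\psi| \le C(1+|u|)$. Since
\begin{equation*}
D_x\psi(y,u) = \chi_\e'(|z|)\,\frac{z}{|z|}\otimes u, \qquad D_u\psi(y,u) = \chi_\e(|z|)\, I,
\end{equation*}
the variational formula reduces to
\begin{equation*}
\int \chi_\e'(|z|) \sum_i \langle \partial_\nu f_i, f_i\rangle + \int \chi_\e(|z|)\,|Df|^2 = 0.
\end{equation*}
Letting $\chi_\e \to \mathbf{1}_{[0,r]}$ as above yields (\ref{eq:frequency2}) for a.e. $r$.

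The calculations themselves are essentially routine once the right test fields are chosen; the only mild technicality is justifying the limit $-\chi_\e' \to \delta_r$ inside the integrals, which is where the qualifier "for a.e. $r$" enters. This is handled by the Lebesgue differentiation theorem applied to the $L^1$ functions $\rho \mapsto \int_{\partial B_\rho(x)}|Df|^2$, $\rho \mapsto \int_{\partial B_\rho(x)}\sum_i|\partial_\nu f_i|^2$, and $\rho \mapsto \int_{\partial B_\rho(x)}\sum_i\langle \partial_\nu f_i, f_i\rangle$, whose existence for a.e. $\rho$ follows from the slicing/coarea formula applied to the $W^{1,2}$ map $f$. I expect this limiting step to be the only point requiring care; the bulk of the argument is a straightforward substitution in the variation formulas.
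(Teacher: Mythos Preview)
Your proposal is correct and follows exactly the standard derivation: the paper itself does not supply a proof of this proposition but simply cites \cite[Proposition~3.2]{DS}, and the argument there is precisely the one you outline---plugging the radial field $\phi(y)=\chi_\varepsilon(|y-x|)(y-x)$ into the inner variation formula for \eqref{eq:frequency1} and $\psi(y,u)=\chi_\varepsilon(|y-x|)u$ into the outer variation formula for \eqref{eq:frequency2}, then passing to the limit in the cutoff.
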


Also, we need the Lipschitz embedding of  $\mathcal{A}_Q(\mathbb{R}^n)$ into an Euclidean space.

\begin{theorem} \cite[Theorem 2.1]{DS} \label{lemma:embedding} 
There exists a set of unit vectors $\Lambda = \{ e_1, \cdots, e_h \} \subset \mathbb{S}^{n-1}$ such that $\xi :  \mathcal{A}_Q(\mathbb{R}^n) \rightarrow \mathbb{R}^{Qh}$ defined as $\xi(T) \coloneqq h^{-1/2} (\pi_1(T), \cdots, \pi_h(T))$ is a Lipschitz embedding. For $T = \sum_i [\![P_i]\!] \in \mathcal{A}_Q(\mathbb{R}^n)$, $\pi_l(T)$ is defined as a $n$-tuple consisted of $P_i \cdot e_l$ rearranged in increasing order.

\end{theorem}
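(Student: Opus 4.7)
The plan is to construct a finite set $\Lambda$ of ``sufficiently generic'' directions and then verify separately the Lipschitz bound for $\xi$ and a Lipschitz bound for $\xi^{-1}$ on the image, so that together they make $\xi$ a bi-Lipschitz embedding into $\mathbb{R}^{Qh}$.

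For the upper Lipschitz bound, fix $T = \sum_i [\![P_i]\!]$ and $S = \sum_i [\![Q_i]\!]$, and let $\tau \in S_Q$ be an optimal matching so that $\mathcal{G}(T,S)^2 = \sum_i |P_i - Q_{\tau(i)}|^2$. For each $e_l \in \Lambda$, the rearrangement inequality says that sorting minimizes the sum of squared differences across all permutations, and therefore
\begin{equation*}
|\pi_l(T) - \pi_l(S)|^2 \le \sum_i |(P_i - Q_{\tau(i)}) \cdot e_l|^2 \le \sum_i |P_i - Q_{\tau(i)}|^2 = \mathcal{G}(T, S)^2.
\end{equation*}
Summing over $l = 1, \ldots, h$ and using the prefactor $h^{-1/2}$ in the definition of $\xi$ immediately yields $|\xi(T) - \xi(S)| \le \mathcal{G}(T, S)$.

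For the lower bound I would first verify \emph{injectivity} of $\xi$ for an appropriate choice of $\Lambda$. From $\pi_l(T)$ one recovers the multiset $\{P_i \cdot e_l\}_{i=1}^Q$, and hence all elementary symmetric functions (equivalently, all power sums) in $(P_1 \cdot e_l, \ldots, P_Q \cdot e_l)$. Viewing these power sums as polynomials in $(P_1, \ldots, P_Q) \in (\mathbb{R}^n)^Q$ parametrized by $e \in \mathbb{S}^{n-1}$, polarization shows that they generate the full ring of $S_Q$-invariants on $(\mathbb{R}^n)^Q$. A Zariski-genericity or density argument then lets one pick a finite $\Lambda$ with $h = h(Q,n)$ sufficient to separate the $S_Q$-orbits, which is exactly injectivity of $\xi$.

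To upgrade injectivity to a quantitative bound $\mathcal{G}(T,S) \le C|\xi(T)-\xi(S)|$, the plan is a compactness argument exploiting two symmetries: $\xi(T+v) - \xi(S+v) = \xi(T) - \xi(S)$ for any $v \in \mathbb{R}^n$, and both $\xi$ and $\mathcal{G}$ are $1$-homogeneous under rescaling. Suppose the inequality failed; after translation and rescaling one obtains sequences with $\mathcal{G}(T_k, S_k) = 1$, barycenters at the origin (so the supports stay in a bounded ball), and $|\xi(T_k) - \xi(S_k)| \to 0$. Passing to a subsequence, $T_k \to T$ and $S_k \to S$ in $\mathcal{G}$; continuity gives $\mathcal{G}(T,S) = 1$ yet $\xi(T) = \xi(S)$, contradicting injectivity. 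The main obstacle in this plan is the invariant-theoretic step: producing an \emph{explicit} finite $\Lambda$ that realizes injectivity and controlling $h$ in terms of $Q,n$; once this is in hand, the Lipschitz upgrade is essentially automatic from the symmetries and compactness.
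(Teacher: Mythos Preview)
The paper does not prove this statement; it is quoted verbatim from \cite[Theorem~2.1]{DS} and used as a black box. So there is no in-paper argument to compare against, and I will simply assess your plan on its own terms.

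Your upper Lipschitz bound via the rearrangement inequality is correct, and the injectivity outline through power sums and polarization is sound (finiteness of $\Lambda$ comes down to choosing enough directions to interpolate polynomials of degree at most $Q$ in $n$ variables, which generic points on $\mathbb S^{n-1}$ do). The genuine gap is in the compactness upgrade to a quantitative lower bound. You assert ``barycenters at the origin (so the supports stay in a bounded ball)'', but this implication is false: already for $Q=2$, $T_k=[\![(k,0)]\!]+[\![(-k,0)]\!]$ has barycenter $0$ and diameter $2k$. Since the scaling symmetry has been spent normalizing $\mathcal G(T_k,S_k)=1$, nothing forces $\max_i|P_i^{(k)}|$ to remain bounded, and one cannot extract a convergent subsequence of $T_k$ in $\mathcal A_Q(\mathbb R^n)$. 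The contradiction step therefore does not close as written, so the claim that ``the Lipschitz upgrade is essentially automatic from the symmetries and compactness'' is not correct.

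This can be repaired, but it is not free. One route is a cluster decomposition plus induction on $Q$: when the support of $T_k$ spreads out, group the points of $T_k$ (and of $S_k$, which lie within distance $1$ of them) into well-separated clusters; for generic directions $e_l$ the sorted projections then respect the clusters, so $\pi_l(T_k)-\pi_l(S_k)$ decouples cluster by cluster and one reduces to strictly smaller $Q$. The original De Lellis--Spadaro proof instead bypasses compactness altogether: it takes $\Lambda$ to be a sufficiently fine net on $\mathbb S^{n-1}$ (fineness depending only on $n,Q$) and proves the lower bound by a direct argument showing that for every pair $T,S$ some $e_l\in\Lambda$ already captures a definite fraction of $\mathcal G(T,S)$. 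Either approach works, but one of these additional ideas is needed.
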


As an initial step in our study, we establish that any Dir-stationary $Q$-valued function $f$ exhibits local boundedness. The subsequent lemma proves an elementary fact concerning subharmonic functions. We can derive the local boundedness of $f$ from the fact that $\sum_i |f_i|^2$ is subharmonic.

\begin{lemma} \label{lemma:mean}
Let $u\in W^{1,1}_{\mathrm{loc}}(\Omega)$, $g\in L_{\mathrm{loc}}^1(\Omega)$, $u, g \ge 0 $. Suppose we have $\Delta u = g$ in the following weak sense : $\int g \eta = - \int Du \cdot D\eta$ for any Lipschitz function $\eta$ with compact support. Then, $\fint_{\partial B_r} u $ and $\fint_{B_r} u$ are non-decreasing with respect to $r$. In particular, $\lim_{r \rightarrow 0} \fint_{B_r} u$ exists uniquely. 
\end{lemma}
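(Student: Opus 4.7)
The plan is to reduce to the smooth case by convolution and then exploit the divergence theorem. Fix $x_0 \in \Omega$ and $R$ with $B_R(x_0) \Subset \Omega$, and let $\rho_\varepsilon$ be a standard non-negative mollifier. For $\varepsilon$ sufficiently small, $u_\varepsilon := u \ast \rho_\varepsilon$ and $g_\varepsilon := g \ast \rho_\varepsilon$ are smooth on a neighbourhood of $\overline{B_R(x_0)}$, they remain non-negative, and the weak hypothesis $\Delta u = g$ transfers to the classical identity $\Delta u_\varepsilon = g_\varepsilon$.

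For smooth $u_\varepsilon$, express $\phi_\varepsilon(r) := \fint_{\partial B_r(x_0)} u_\varepsilon\, d\sigma$ in polar form $\phi_\varepsilon(r) = \frac{1}{m\omega_m}\int_{S^{m-1}} u_\varepsilon(x_0 + r\omega)\, d\sigma(\omega)$, differentiate under the integral, and push forward to obtain
\begin{equation*}
\phi_\varepsilon'(r) \;=\; \frac{1}{m\omega_m r^{m-1}} \int_{\partial B_r(x_0)} \partial_\nu u_\varepsilon\, d\sigma \;=\; \frac{1}{m\omega_m r^{m-1}} \int_{B_r(x_0)} g_\varepsilon \;\geq\; 0,
\end{equation*}
using the classical divergence theorem and $\Delta u_\varepsilon = g_\varepsilon$. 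Hence $\phi_\varepsilon$ is non-decreasing. Writing $\Psi_\varepsilon(r) := \fint_{B_r(x_0)} u_\varepsilon = \frac{m}{r^m}\int_0^r s^{m-1}\phi_\varepsilon(s)\, ds$, a short computation yields $\Psi_\varepsilon'(r) = \frac{m}{r}(\phi_\varepsilon(r) - \Psi_\varepsilon(r))$, and since $\phi_\varepsilon$ is non-decreasing the bound $\Psi_\varepsilon(r) \leq \phi_\varepsilon(r)$ is immediate; thus $\Psi_\varepsilon$ is non-decreasing as well.

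To pass to the limit, note that $u_\varepsilon \to u$ in $L^1_{\mathrm{loc}}$, so $\Psi_\varepsilon(r) \to \fint_{B_r(x_0)} u$ for every $r \in (0, R)$, and monotonicity survives the limit, proving that $r \mapsto \fint_{B_r(x_0)} u$ is non-decreasing. For the sphere averages, the polar decomposition $\int_{B_r} u = \int_0^r \int_{\partial B_s} u\, d\sigma\, ds$ shows $\fint_{\partial B_s} u$ is finite for a.e.\ $s$, and for such $s$ one has $\fint_{\partial B_s} u_\varepsilon \to \fint_{\partial B_s} u$ along a subsequence of $\varepsilon \to 0$, so the monotonicity of $\phi_\varepsilon$ transfers there too. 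Finally, $r \mapsto \fint_{B_r(x_0)} u$ is non-decreasing and bounded below by $0$, so the limit as $r \to 0^+$ exists, establishing the last claim.

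The argument is essentially classical; the only mild subtlety is justifying that the sphere average inherits monotonicity despite being only almost-everywhere defined, which the mollification-plus-Fubini step handles cleanly. No deeper obstacle is anticipated, since the full strength of the subharmonic mean-value philosophy is available once the Laplacian of the mollification is recognized as non-negative.
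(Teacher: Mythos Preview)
Your argument is correct and complete: mollification legitimately transfers the weak equation to the classical identity $\Delta u_\varepsilon = g_\varepsilon$, the smooth mean-value computation is standard, and your limit-passage for both ball and sphere averages is justified (the Fubini-plus-subsequence step for the sphere averages is the right way to handle the a.e.\ issue).

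The paper, however, takes a different route. Rather than mollifying, it works directly at the weak level by testing against the explicit Lipschitz function
\[
\eta(x) = \begin{cases}
\rho^{2-m} - R^{2-m}, & |x| \le \rho,\\
|x|^{2-m} - R^{2-m}, & \rho \le |x| \le R,\\
0, & |x| \ge R,
\end{cases}
\]
which is a truncated Newtonian potential. Since $\eta$ is harmonic on the annulus, an integration by parts there collapses the identity $\int g\eta = -\int Du\cdot D\eta$ to $C(m)\bigl(\fint_{\partial B_R} u - \fint_{\partial B_\rho} u\bigr) = \int g\eta \ge 0$, yielding monotonicity of the sphere averages in one stroke; the ball-average monotonicity is then derived from an algebraic identity. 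The trade-off: the paper's argument is shorter and avoids any approximation or limit, but it implicitly assumes $m\ge 3$ (the test function degenerates at $m=2$, where one would replace it by the logarithmic fundamental solution). Your mollification approach is dimension-agnostic and perhaps more familiar, at the cost of the extra limit-passage step.
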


\begin{proof}
Denote $|x|$ by $r$. For $R>\rho > 0 $,  define a Lipschitz function $\eta$ as follows:
$$\eta(x) = \eta(r) = \begin{cases}
\frac{1}{\rho^{m-2}} - \frac{1}{R^{m-2}} & r \le \rho \\
\frac{1}{r^{m-2}} - \frac{1}{R^{m-2}} & \rho \le r \le R \\
0 & r \ge R
\end{cases}$$

Testing with $\eta$, we compute:
\begin{align*} 
\int g \eta & = - \int Du \cdot D\eta = -\int _{B_R \setminus B_\rho} D\eta \cdot Du \\
& = \int_{B_R \setminus B_\rho} \Delta \eta u - \int_{\partial B_R} \partial_\nu \eta u + \int_{\partial B_\rho} \partial_\nu \eta u \\
& = C(m)(\fint_{\partial B_R} u - \fint_{\partial B_\rho } u ).
\end{align*}
Here, $C(m) = (m-2) Vol (S_1^{m-1})$. As $g, \eta \ge 0$. we have $\fint_{\partial B_R} u \ge\fint_{\partial B_\rho } u$.
From the following identity,

\begin{align*}
\fint_{B_R} u - \fint_{B_\rho} u & = \frac{1}{m} \big(\frac{\int_0^R r^{m-1} \fint_{\partial B_r} u}{R^m}  -\frac{\int_0^\rho r^{m-1} \fint_{\partial B_r} u}{\rho^m}\big)\\
& = \frac{1}{m} \big[\frac{\int_{\rho}^R r^{m-1} \fint_{\partial B_r} u}{R^m} -  (\frac{1}{\rho^m}-\frac{1}{R^m})\int_0^\rho r^{m-1} \fint_{\partial B_r} u \big].
\end{align*}
we have $\fint_{B_R} u \ge \fint_{B_\rho} u$. Since $u$ is bounded below, the limit $\lim_{r \rightarrow 0} \fint_{B_r} u$ exists uniquely.
\end{proof}

\begin{proposition}(Local Boundedness of $f$) \label{lemma:boundedness}
Let $f = \sum_i [\![f_i]\!]$ be a stationary $Q$-valued function. Denote the $p$-th coordinate of $f_i \in \mathbb{R}^n$ by $f_i^p$ for $1\le p \le n$. Then $\sum_i (f_i^p)^2(x) \le \fint_{B_r(x)}  \sum_i (f_i^p)^2$ for every Lebesgue point $x$ and $r < \mathrm{dist}(x, \partial \Omega)$.
\end{proposition}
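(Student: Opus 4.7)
The plan is to recognize that for each fixed coordinate $p \in \{1, \ldots, n\}$, the nonnegative scalar function $u_p := \sum_i (f_i^p)^2$ is weakly subharmonic on $\Omega$ in the sense of Lemma \ref{lemma:mean}, and then read off the claimed bound from the monotonicity conclusion of that lemma.

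To prove subharmonicity, I would feed the outer variation identity of Definition \ref{def:stationary} the test function
\[
\psi(x, u) \; := \; \eta(x) \, u^p \, e_p,
\]
where $e_p$ is the $p$-th standard basis vector of $\mathbb{R}^n$, $u^p$ is the $p$-th Euclidean coordinate of $u$, and $\eta$ is a nonnegative test function with compact support in $\Omega$. This $\psi$ satisfies the admissibility bounds $|D_u\psi| \le \|\eta\|_\infty$ and $|\psi|+|D_x\psi| \le C(1+|u|)$. A direct calculation of $D_x\psi$ and $D_u\psi$ collapses the two terms in the outer variation formula to
\[
\tfrac{1}{2} \int D u_p \cdot D\eta \; + \; \int \eta \sum_i |Df_i^p|^2 \; = \; 0,
\]
which is precisely the weak equation $\Delta u_p = 2 \sum_i |Df_i^p|^2$ required by Lemma \ref{lemma:mean}, with $g := 2\sum_i |Df_i^p|^2 \ge 0$ in $L^1_{\mathrm{loc}}$ because $f \in W^{1,2}$. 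A standard mollification argument extends the identity from smooth $\eta$ (as required by Definition \ref{def:stationary}) to Lipschitz $\eta$ of compact support.

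Applying Lemma \ref{lemma:mean} to $u_p$ yields that $r \mapsto \fint_{B_r(x)} u_p$ is non-decreasing on $(0, \mathrm{dist}(x, \partial\Omega))$ and that the limit as $r \to 0^+$ exists. At any Lebesgue point $x$ of the scalar function $u_p$, this limit equals $u_p(x)$, so monotonicity delivers $u_p(x) \le \fint_{B_r(x)} u_p$ for every admissible $r$, which is the claim.

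The main subtlety I anticipate is justifying the Leibniz-type identity $f_i^p \, Df_i^p = \tfrac{1}{2} D\!\left((f_i^p)^2\right)$ in the multivalued setting, since the individual branches are only locally Sobolev up to relabeling. This goes through because the symmetric quantity $\sum_i (f_i^p)^2$ is invariant under permutations of the branches and therefore inherits a well-defined Sobolev structure from $f$ via the Lipschitz embedding of Theorem \ref{lemma:embedding}, so the classical chain rule applies componentwise after passing to the Euclidean target.
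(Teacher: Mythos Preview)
Your proof is correct and follows essentially the same route as the paper: test the outer variation formula with $\psi(x,u)=\eta(x)\,u^p\,e_p$ to obtain $\Delta\bigl(\sum_i (f_i^p)^2\bigr)=2\sum_i |Df_i^p|^2$ weakly, and then invoke Lemma~\ref{lemma:mean}. Your additional remarks on mollification and the permutation-invariance justification for the chain rule are details the paper leaves implicit.
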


\begin{proof}

Let $f_i(x) = (f_i^p(x))_{1 \le p \le n} = (f_i^1(x), \cdots , f_i^n(x))$. In the outer variation formula,  we test with $\psi^q(x) = \eta(x)  \delta_{pq} x^q$, where $\eta$ is a compactly supported Lipshitz function. We have the following identity. 

\begin{equation}
-\int \sum_i D\eta \cdot D(\sum_i (f_i^p)^2) = 2\int \eta \sum_i |Df_i^p|^2.
\end{equation}

In the other words, $\Delta  (\sum_i (f_i^p)^2) =  2\sum_i |Df_i^p|^2$ in the  weak sense. Now the conclusion follows from Lemma \ref{lemma:mean}.

\end{proof}


\section{Improved energy decay estimate and its applications}

In this section, we present the improved energy decay estimate for a stationary $Q$-valued function $f$ and utilize it to prove the main theorems. It's important to note that for a stationary $Q$-valued function $f$, the equation (\ref{eq:frequency1}) provides us the estimate
\begin{equation}
\int_{B_r} |Df|^2 dx \le r^{m-2} \int_{B_1} |Df|^2.
\end{equation}
We will enhance the above estimate solely using equations (\ref{eq:frequency1}), (\ref{eq:frequency2}), and the local boundedness of $f$.

\begin{proposition}\label{lemma:decay}
Let $f \in W^{1,2}(\Omega, \mathcal{A}_Q(\mathbb{R}^n))$ be a Dir-stationary $Q$-valued function. For any $x \in \Omega$ and $r < \frac{1}{2}\min (\mathrm{dist}(x, \partial \Omega), 1)$, we have 
\begin{equation}
\frac{1}{r^{m-2}} \int_{B_r(x)} |Df|^2 \le \frac{C}{\log \frac{1}{r}}.
\end{equation}
 Here, $C$ depends on $\mathrm{dist}(x, \partial \Omega)$, $m, Q, ||f||_{W^{1,2}(\Omega, \mathcal{A}_Q ( \mathbb{R}^n ))}$.
\end{proposition}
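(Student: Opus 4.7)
The plan is to combine the two frequency-type identities \eqref{eq:frequency1}, \eqref{eq:frequency2} with Cauchy--Schwarz and the local boundedness from Proposition \ref{lemma:boundedness}, to derive a Bernoulli-type differential inequality for the scaled Dirichlet energy, and then integrate it. I translate so that $x=0$, set $R_0 := \tfrac{1}{2}\min(\mathrm{dist}(x,\partial\Omega),1)$, and introduce
\[
D(r) := \int_{B_r} |Df|^2, \qquad H(r) := \int_{\partial B_r}\sum_i |f_i|^2, \qquad E(r) := \frac{D(r)}{r^{m-2}}.
\]
By Proposition \ref{lemma:boundedness}, $\sum_i |f_i|^2$ is bounded on a fixed ball around $x$ by a constant $C_0$ controlled by $m,Q,\mathrm{dist}(x,\partial\Omega),\|f\|_{W^{1,2}}$, hence $H(r)\le C_1 r^{m-1}$ for $r\le R_0$.

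Next I apply Cauchy--Schwarz pointwise and then on $\partial B_r$ to the right-hand side of \eqref{eq:frequency2} to obtain
\[
D(r)^2 \;\le\; H(r)\cdot\int_{\partial B_r}\sum_i |\partial_{\nu}f_i|^2,
\]
and rewrite the last integral via \eqref{eq:frequency1} as $(rD'(r)-(m-2)D(r))/(2r)$, which holds for a.e.\ $r$. Since $r^{m-1}E'(r) = rD'(r)-(m-2)D(r)$ and $H(r)\le C_1 r^{m-1}$, the previous line simplifies to the Bernoulli-type inequality
\[
E(r)^2 \;\le\; C\,r\,E'(r) \qquad \text{for a.e.\ } r\in(0,R_0],
\]
with $C$ depending only on $m,Q,\mathrm{dist}(x,\partial\Omega),\|f\|_{W^{1,2}}$. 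Observe that \eqref{eq:frequency1} also gives $E'\ge 0$ a.e., so $E$ is non-decreasing on $(0,R_0]$.

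Finally, if $E\equiv 0$ the conclusion is vacuous; otherwise, by monotonicity $E(s)>0$ for all $s$ in some interval $[r_\ast,R_0]$, and on this interval the inequality rearranges to $\frac{d}{dr}\!\left(-\tfrac{1}{E(r)}\right)\ge \tfrac{1}{Cr}$. Integrating from $r$ to $R_0$ yields
\[
\frac{1}{E(r)} \;\ge\; \frac{1}{E(R_0)} + \frac{1}{C}\log\frac{R_0}{r},
\]
and since $E(R_0)\le R_0^{2-m}\|f\|_{W^{1,2}}^2$ is bounded and $\log(R_0/r)=\log(1/r)-\log(1/R_0)$, the desired estimate $E(r)\le C'/\log(1/r)$ follows for sufficiently small $r$; for $r$ bounded away from $0$, the inequality is trivial after enlarging the constant. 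The main obstacle I anticipate is not the algebraic derivation but rather justifying the ODE manipulations rigorously: $D$ is only absolutely continuous, \eqref{eq:frequency1}--\eqref{eq:frequency2} hold merely a.e., and one must carefully argue that $E$ inherits enough regularity (absolute continuity on compact subintervals of $(0,R_0]$ and monotonicity) for the integration of $E'/E^2$ against $1/(Cr)$ to be valid despite vanishing sets and the potential Cantor-type behavior of monotone a.e.\ derivatives.
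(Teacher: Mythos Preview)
Your proposal is correct and follows essentially the same route as the paper: both combine the outer variation identity \eqref{eq:frequency2} with Cauchy--Schwarz and the local boundedness of $f$ to control $\int_{\partial B_r}\sum_i|\partial_\nu f_i|^2$ from below, then plug into \eqref{eq:frequency1} to obtain the Bernoulli-type inequality $E'(r)\ge \tfrac{1}{C r}E(r)^2$ and integrate. Your version of the Cauchy--Schwarz step (bounding $D(r)^2\le H(r)\int_{\partial B_r}\sum_i|\partial_\nu f_i|^2$) is in fact slightly cleaner than the paper's pointwise $|f|\le M$ argument, and your remark about absolute continuity of $D$ is a valid caveat that the paper does not address explicitly.
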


\begin{proof}
Let $E(r) = \frac{1}{r^{m-2}} \int_{B_r(x)} |Df|^2$. If $E(r_0) = 0 $ for some $r_0$, the inequality we want to prove is trivial.
Let $R = \frac{1}{2}\min (\mathrm{dist}(x, \partial \Omega), 1)$. By Proposition \ref{lemma:boundedness}, there exists $M>0$ such that $|f|\le M$ on the ball $B_R(x)$.

For a.e. $r < R$, we have following inequalities:
\begin{align*}
|\int_{\partial B_r(x) } \sum_i \langle \partial_\nu f_i, f_i  \rangle| &\le \int_{\partial B_r (x) } \sum_i |\partial_\nu f_i | |f_i| \\
& \le M \int_{\partial B_r(x)} \sum_i |\partial_\nu f_i| \\
& \le M \sqrt{Q} \int_{\partial B_r(x)} \sqrt{\sum_i |\partial_\nu f_i|^2} \\
& \le C (\int_{\partial B_r(x)} \sum_i |\partial_\nu f_i|^2 )^{1/2} r^{\frac{m-1}{2}}
\end{align*}
Combining with the equation (\ref{eq:frequency2}), we have
$$\int_{\partial B_r} |\partial_\nu f|^2 \ge \frac{C}{r^{m-1}} (\int_{B_r} |Df|^2)^2.$$ 
Now using the equation (\ref{eq:frequency1}), 
$$r\int_{\partial B_r} |Df|^2 - (m-2)\int_{B_r(x)} |Df|^2 \ge \frac{C}{r^{m-2}} (\int_{B_r} |Df|^2 ) ^2$$
 which gives us the inequality $E'(r) \ge \frac{C}{r} E(r)^2$ or $-\frac{C}{r} \ge -\frac{E'(r)}{E(r)^2}$. 

By integrating the above differential inequality, we get 
\begin{equation}\label{decayeq}
-C \log(\frac{R}{r}) \ge \frac{1}{E(R)} - \frac{1}{E(r)}
\end{equation}
 for $r<R$. Hence, $E(r) \le \frac{1}{C\log (\frac{R}{r}) + \frac{1}{E(R)}}$.

\end{proof}

As a by-product of the above improved estimate, we have the following Liouville-type theorem for bounded Dir-stationary $Q$-valued functions on $\mathbb{R}^m$.
\begin{theorem}
Let $f \in W^{1,2}_{\mathrm{loc}}(\mathbb{R}^m, \mathcal{A}_Q(\mathbb{R}^n))$ be a bounded Dir-stationary $Q$-valued function. Then, $f$ is a constant function.
\end{theorem}
\begin{proof}
Let $E(r) = \frac{1}{r^{m-2}} \int_{B_r(x)} |Df|^2$. Assume there exists $r_0>0$ such that $E(r) > 0$ for all $r\ge r_0$. By following the proof of Proposition \ref{lemma:decay}, we can observe that the inequality (\ref{decayeq}) holds for every $R, r \ge r_0$. 
In particular, we have
$$E(r_0) \le \frac{1}{C\log (\frac{R}{r_0}) + \frac{1}{E(R)}}$$
for all $R \ge r_0$. If we take the limit as $R \rightarrow \infty$, we find that $E(r_0 ) = 0$, which contradicts our assumption.
\end{proof}

With the help of Proposition \ref{lemma:decay} and some algebraic ideas, we are now ready to prove the second main theorem of this article.

\begin{theorem} \label{thm:lebesgue}
Let $f \in W^{1,2}(\Omega, \mathcal{A}_Q(\mathbb{R}^n))$ be a Dir-stationary $Q$-valued function. Then, any $x \in \Omega$ is a Lebesgue point of $f$, or equivalently, $\xi \circ f$.
\end{theorem}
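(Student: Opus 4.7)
The plan is to combine the improved decay of Proposition \ref{lemma:decay} with a VMO argument, using a one-parameter family of shifted power sums of $\{f_i \cdot e_l\}$ that are subharmonic for every shift. Since $\xi$ is bi-Lipschitz onto a closed subset of $\mathbb R^{Qh}$, it suffices to show that every $x_0 \in \Omega$ is a Lebesgue point of $u := \xi \circ f$. Fix $x_0 \in \Omega$; Proposition \ref{lemma:boundedness} together with Lebesgue differentiation gives that $f$ is (essentially) bounded on some ball $B_R(x_0)$. Combining Poincar\'e's inequality on each scalar coordinate of $u$ with $|Du| \le \mathrm{Lip}(\xi)\,|Df|$ and Proposition \ref{lemma:decay} yields
\[
\fint_{B_r(x_0)} \bigl| u - u_{B_r(x_0)} \bigr|^2 \;\le\; \frac{C r^2}{|B_r|} \int_{B_r(x_0)} |Df|^2 \;\le\; \frac{C}{\log(1/r)} \;\longrightarrow\; 0.
\]
Thus $u$ has vanishing mean oscillation at $x_0$, and it remains to prove that $u_{B_r(x_0)}$ admits a unique limit as $r \to 0$.

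The uniqueness rests on a family of subharmonic quantities indexed by a real parameter. For integers $k \ge 1$, indices $l \in \{1,\dots,h\}$, and scalars $c \in \mathbb R$, set $g_{k,l,c}(x) := \sum_{i=1}^Q (f_i(x) \cdot e_l - c)^{2k}$. Testing the outer variation formula of Definition \ref{def:stationary}(2) with a cutoff version of $\psi(x,u) = \eta(x)(u \cdot e_l - c)^{2k-1} e_l$ shows
\[
\Delta g_{k,l,c} \;=\; 2k(2k-1) \sum_i (f_i \cdot e_l - c)^{2k-2}\,|\nabla(f_i \cdot e_l)|^2 \;\ge\; 0
\]
in the weak sense of Lemma \ref{lemma:mean}; since also $g_{k,l,c} \ge 0$, Lemma \ref{lemma:mean} supplies a unique limit of $\fint_{B_r(x_0)} g_{k,l,c}$ as $r \to 0$ for every admissible triple. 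Expanding the binomial, this average is a polynomial of degree $\le 2k$ in $c$ whose coefficients are scalar multiples of $\fint_{B_r(x_0)} \sigma_j^{(l)}$, where $\sigma_j^{(l)}(x) := \sum_i (f_i(x) \cdot e_l)^j$; these coefficients are bounded uniformly in $r$ by local boundedness of $f$. Pointwise convergence in $c$ of a uniformly bounded family of degree $\le 2k$ polynomials therefore upgrades, by interpolation at $2k+1$ distinct values of $c$, to convergence of every coefficient. Choosing $k = Q$ establishes the existence of $\Sigma_j^{(l)}(x_0) := \lim_{r \to 0} \fint_{B_r(x_0)} \sigma_j^{(l)}$ for all $j = 1,\dots,Q$ and $l = 1,\dots,h$.

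To conclude, extract any subsequence $r_n \to 0$ along which $u_{B_{r_n}(x_0)} \to \bar\xi$ (possible by boundedness of $u$). The identity $\fint_{B_{r_n}}|u - \bar\xi|^2 = \fint_{B_{r_n}}|u - u_{B_{r_n}}|^2 + |u_{B_{r_n}} - \bar\xi|^2$ and the VMO bound give $\fint_{B_{r_n}}|u - \bar\xi|^2 \to 0$; since $u(x) \in \xi(\mathcal A_Q(\mathbb R^n))$ pointwise and the image of $\xi$ is closed in $\mathbb R^{Qh}$, a distance-to-closed-set argument forces $\bar\xi = \xi(T_0)$ for some $T_0 \in \mathcal A_Q(\mathbb R^n)$. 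Each $\sigma_j^{(l)}$ is a fixed polynomial in the coordinates of $\xi$, locally Lipschitz on bounded sets, so $L^2$-mean convergence of $u$ to $\bar\xi$ yields $\fint_{B_{r_n}} \sigma_j^{(l)} \to \sigma_j^{(l)}(T_0)$, and hence $\sigma_j^{(l)}(T_0) = \Sigma_j^{(l)}(x_0)$ for every $j, l$. Since the multiset $\{P_i^0 \cdot e_l\}_i$ associated to $T_0 = \sum_i [\![ P_i^0 ]\!]$ is determined by its first $Q$ power sums (Newton's identities plus the algebraic fact that a monic polynomial is determined by its roots), and since the generic directions $\{e_l\}$ of \cite[Theorem 2.1]{DS} are chosen so that the projected multisets determine $T_0$ itself, $\bar\xi$ is uniquely pinned down. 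Hence $u_{B_r(x_0)} \to \xi(T_0)$ without subsequence, and VMO upgrades this to $\fint_{B_r(x_0)} |u - \xi(T_0)|^2 \to 0$, i.e.\ $x_0$ is a Lebesgue point.

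The main obstacle is the middle step: extracting individual convergence of $\fint_{B_r(x_0)} \sigma_j^{(l)}$ from the slow decay rate $1/\log(1/r)$, which is too weak to give a summable modulus of continuity via the usual telescoping argument. The essential idea is to manufacture a one-parameter subharmonic family $\{g_{k,l,c}\}_{c \in \mathbb R}$ via the outer variation formula and convert the mean-value limits of Lemma \ref{lemma:mean} into individual convergence of power sums by polynomial interpolation in $c$. The subsequent identification $\bar\xi = \xi(T_0)$ is precisely the cash-out of the continuity-of-polynomial-roots principle alluded to in the introduction.
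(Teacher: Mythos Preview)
Your proof is correct and follows the same overall architecture as the paper: first the VMO bound at $x_0$ from Poincar\'e plus Proposition~\ref{lemma:decay}, then uniqueness of the subsequential limit $\bar\xi$ by showing that the averaged power sums $\fint_{B_r}\sum_i(f_i\cdot e_l)^j$ have unique limits (outer variation $+$ Lemma~\ref{lemma:mean}), and finally the identification of $T_0$ via Newton's identities together with the injectivity of $\xi$. The one substantive difference is how you arrange the nonnegativity hypotheses of Lemma~\ref{lemma:mean}. The paper translates $f$ once by $(M,\dots,M)$ so that $f_i\cdot e_l\ge 0$ on $B_R(x_0)$ and then applies Lemma~\ref{lemma:mean} directly to $\sum_i(f_i\cdot e_l)^k$ for every $k$; you instead keep $f$ fixed, use the even-power family $g_{k,l,c}=\sum_i(f_i\cdot e_l-c)^{2k}$ (automatically nonnegative with nonnegative Laplacian for every $c$), and then recover the individual power-sum averages by Vandermonde interpolation in the parameter $c$. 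The paper's translation is shorter; your interpolation device is a neat alternative that avoids modifying $f$ and makes the role of the free parameter explicit.
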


\begin{proof}
Let $r_0 = \frac{1}{2}\min (\mathrm{dist}(x, \partial \Omega), 1)$. By the Poincar\'{e} inequality and Proposition \ref{lemma:decay}, $\fint_{B_r(x)} \big|\xi \circ f - \fint_{B_r(x)} \xi \circ f \big|^2 \le \frac{C}{\log \frac{1}{r}}$ for $r < r_0$. Here, we used the fact that $\xi$ is a Lipschitz embedding. The right hand side converges to 0 as $r \rightarrow 0$. Hence it suffices to prove that the limit $\lim_{r \rightarrow 0} \fint_{B_r(x)} \xi \circ f$ exists uniquely. 

By Proposition \ref{lemma:boundedness}, there exists $M>0$ such that  $|\xi \circ f|<M$ on the ball $B_{r_0}(x)$. Suppose there exists $r_j \rightarrow 0$ and $r_j' \rightarrow 0$ s.t. $\lim_j \fint_{B_{r_j}(x)} \xi \circ f = \beta$, and $\lim_j \fint_{B_{r_j'}(x)} \xi \circ f = \beta'$. We will prove below that $\beta = \beta'$ must hold. This will confirm the assertion of the theorem. For the sake of brevity, we will abbreviate $B_r(x)$ as simply $B_r$.

We first note that by the Cauchy-Schwarz inequality, we have
\begin{align*}
\lim_j \fint_{B_{r_j}} \big|(\xi \circ f)(y) - \beta \big|dy & \le \lim_j  \fint_{B_{r_j}} \big| (\xi \circ f)(y) - \fint_{B_{r_j}} \xi \circ f \big| + \big|\fint_{B_{r_j}} \xi \circ f - \beta \big|\\
&\le \lim_j \big( \fint_{B_{r_j}} \big| (\xi \circ f)(y) - \fint_{B_{r_j}} \xi \circ f \big|^2 \big)^{\frac{1}{2}} = 0
\end{align*}
from which the equality $\lim_{j} \fint_{B_{r_{j}}} \big| (\xi \circ f)(y) - \beta \big| dy= 0$ holds. Likewise, we have $\lim_{j'} \fint_{B_{r_{j'}}} \big| (\xi \circ f)(y) - \beta' \big| dy= 0$.

From the inequality:
$$\mathrm{dist}(\beta, Im(\xi)) \le \lim_{j} \fint_{B_{r_j}} \big|\xi \circ f - \beta \big|^2 = \lim_{j} \fint_{B_{r_j}} \big|\xi \circ f - \fint_{B_{r_j}} \xi \circ f \big|^2 = 0,$$
we see that there exist $\alpha = [\![\alpha_i]\!], \alpha'= [\![\alpha_i' ]\!] \in \mathcal{A}_{Q} (\mathbb{R}^n)$ such that $\beta = \xi (\alpha)$, $\beta' = \xi (\alpha')$.

Let's choose $e_h \in \Lambda$. Our aim is to prove that  $\{ \alpha_i \cdot e_h \} \equiv \{ \alpha'_i \cdot e_h \}$, or in other words, $\sum_i (\alpha_i \cdot e_h)^k = \sum_i (\alpha'_i \cdot e_h)^k$ holds true for every positive integer $k$. Without loss of generality, we assume $e_h = e_1 = (1, 0, \cdots , 0)$ and the first $Q$ components of $\xi \cdot f$ are a rearrangement of $\{ \alpha_i \cdot e_h \}$ in the increasing order. Since the Dirichlet energy is isotropic, our proof is applicable to any $e_h \in \Lambda$ in the exact same manner.

For $k=1$, $\sum_i f_i (x)$ is a harmonic function by \cite[Lemma 3.23]{DS}. It is trivial to see that  $\lim_{r \rightarrow 0} \fint_{B_r} \sum_i f_i (x)$ is uniquely determined. Hence, the equality $\sum_i \alpha_i = \sum_i \alpha_i'$ holds as vectors. In particular, $\sum_i \alpha_i \cdot e_h = \sum_i \alpha_i' \cdot e_h$.

First, we prove that $ \lim_j \fint_{B_{r_j}} \big|\sum_{a=1}^Q [(\xi \circ f)^a]^k(y) - \sum_{a=1}^Q (\beta^a)^k \big| = 0$. The superscript $a$ denotes $a$-th component of vectors in the Euclidean space. The proof is an easy application of the triangle inequality: 
\begin{align*}
 \lim_j \fint_{B_{r_j}} \big|\sum_{a=1}^Q [(\xi \circ f)^a]^k(y) - \sum_{a=1}^Q (\beta^a)^k \big| & \le  \lim_j \fint_{B_{r_j}} \sum_{a=1}^Q \big| [(\xi \circ f)^a]^k(y) - (\beta^a)^k \big| \\
& \le k M^{k-1} \lim_j \fint_{B_{r_j}} \sum_{a=1}^Q \big| (\xi \circ f)^a(y) - (\beta^a) \big|  \\
& = 0.
\end{align*}
By the same argument, $\lim_j \fint_{B_{r_j'}} \big|\sum_{a=1}^Q [(\xi \circ f)^a]^k(y) - \sum_{a=1}^Q (\beta'^a)^k \big| = 0$.

Next, we prove that actually $\sum_{a=1}^Q (\beta^a)^k = \sum_{a=1}^Q (\beta'^a)^k$ holds. Note that we can translate $f$ by the vector $(M, \cdots, M)$ so that the translated function is still a stationary $Q$-valued function and $f_i \cdot e_1 \ge 0$ for $1 \le i \le Q$. In the outer variation formula, we test with $\psi^q (x) = \eta (x) \delta_{1q}(x^q)^{k-1}$ where $\eta$ is a compactly supported Lipshictz function. Then we have the following identity:

\begin{equation}\label{eq:kweak}
-\int \sum_i D\eta \cdot D(\sum_i (f_i^1)^k) = k(k-1) \int \eta \sum_i |Df_i^1|^2 (f_i^1)^{k-2}.
\end{equation}
In other words, $\Delta (\sum_i (f_i^1)^k ) = k(k-1)\sum_i |Df_i^1|^2 (f_i^1)^{k-2}$ in the weak sense. As $f_i^1 \ge 0$, we may apply Lemma \ref{lemma:mean}, to see that  the limit of $\fint_{B_r}  \sum_i (f_i^1)^k$, or $\fint_{B_r} \sum_{a=1}^Q [(\xi \circ f)^a]^k $  exists uniquely. 

By the Poincar\'{e} inequality and Proposition \ref{lemma:decay},
\begin{align*}
& \lim_{r \rightarrow 0} \fint_{B_{r}} \big|\sum_{a=1}^Q [(\xi \circ f)^a]^k(y) - \fint_{B_r} \sum_{a=1}^Q [(\xi \circ f)^a]^k\big| \\
& \le  \lim_{r \rightarrow 0} \big(  \fint_{B_{r}} \big|\sum_{a=1}^Q [(\xi \circ f)^a]^k(y) - \fint_{B_r} \sum_{a=1}^Q [(\xi \circ f)^a]^k\big|^2 \big)^{\frac{1}{2}}\\
& \le \lim_{r \rightarrow 0} \big( \frac{C}{r^{m-2}}\int_{B_r} \big|\nabla  (\sum_{a=1}^Q [(\xi \circ f)^a]^k) \big|^2    \big)^{\frac{1}{2}} \\
& \le \lim_{r \rightarrow 0} \big( \frac{CM^{2k-1}}{r^{m-2}}\int_{B_r} |\nabla (\xi \circ f) |^2    \big)^{\frac{1}{2}} \le \lim_{r \rightarrow 0} \big(\frac{CM^{2k-1}}{\log {\frac{1}{r}}}\big)^{\frac{1}{2}} = 0 
\end{align*} 
Therefore, we have established the equality
\begin{equation*}
\sum_{a=1}^Q (\beta^a)^k = \lim_{r \rightarrow 0} \fint_{B_r} \sum_{a=1}^Q [(\xi \circ f)^a]^k = \sum_{a=1}^Q (\beta'^a)^k.
\end{equation*}

This confirms that $\sum_i (\alpha_i \cdot e_h)^k = \sum_i (\alpha'_i \cdot e_h)^k$. As a consequence, the values of elementary symmetric polynomials for $\{ \alpha_i \cdot e_h \}$ and $\{ \alpha'_i \cdot e_h \}$ coincide. Therefore, the sets $\{ \alpha_i \cdot e_h \}$ and $\{ \alpha'_i \cdot e_h \}$ are equivalent. By applying the same argument to any vector $e \in \Lambda$, we conclude that $\alpha = \alpha'$, considering the construction of $\xi$.
\end{proof}

\begin{corollary}
Let $f \in W^{1,2}(\Omega, \mathcal{A}_Q(\mathbb{R}^n))$ be a Dir-stationary $Q$-valued function. Then, the set of discontinuity of $f$ is meager.
\end{corollary}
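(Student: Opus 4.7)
The plan is to invoke the classical Baire theorem that a function of Baire class $1$ has a meager set of discontinuities, combined with Theorem \ref{thm:lebesgue}.

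First, I would compose with the Lipschitz embedding $\xi$ from Theorem \ref{lemma:embedding}. Since $\xi$ is bi-Lipschitz onto its image in $\mathbb{R}^{Qh}$, the point $x$ is a continuity point of $f$ if and only if it is a continuity point of $\xi \circ f$, so it suffices to prove the analogous meagerness statement for $\xi \circ f \colon \Omega \to \mathbb{R}^{Qh}$.

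Next, I would fix the pointwise representative
\[
g(x) := \lim_{r \to 0} \fint_{B_r(x)} \xi \circ f,
\]
which exists at every $x \in \Omega$ by Theorem \ref{thm:lebesgue} and agrees with $\xi \circ f$ almost everywhere. For each fixed $r > 0$, the averaged map $g_r(x) := \fint_{B_r(x)} \xi \circ f$ is continuous in $x$ by the continuity of translations in $L^1_{\mathrm{loc}}$; since $\xi \circ f$ is locally bounded by Proposition \ref{lemma:boundedness}, these averages are well defined on any compactly contained subdomain. Hence $g$ is the pointwise limit, as $r \to 0$, of a sequence of continuous functions, i.e.\ a function of Baire class $1$ on $\Omega$.

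A classical theorem of Baire states that the set of discontinuities of a Baire-class-$1$ function on a complete metric space is an $F_\sigma$ set of first category, hence meager. Applying this to $g$ and transferring the result back to $f$ via the bi-Lipschitz embedding $\xi$ yields the corollary. There is no real obstacle once Theorem \ref{thm:lebesgue} is in hand: the Lebesgue property at every point is precisely the input needed to promote the continuous mollifications $g_r$ to a pointwise-defined Baire-class-$1$ representative, after which Baire's classical result finishes the proof.
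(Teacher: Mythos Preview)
Your argument is correct and follows the same skeleton as the paper: Theorem~\ref{thm:lebesgue} furnishes a pointwise representative that is the pointwise limit of continuous ball-averages, hence of Baire class~1, and Baire's classical theorem then gives a meager discontinuity set. The one difference is in how you pass between $f$ and scalar-valued functions. The paper encodes $f$ through the finitely many power sums $g_{h,k}=\sum_i (f_i\cdot e_h)^k$ and, after showing each $g_{h,k}$ has meager discontinuity set, invokes the continuous dependence of polynomial roots on coefficients to transfer continuity back to $f$. You instead work directly with $\xi\circ f$ and use that $\xi$ is bi-Lipschitz onto its image, so continuity of $f$ and of $\xi\circ f$ are equivalent; this is a cleaner and more direct route, and avoids the extra algebraic step.
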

\begin{proof}
Let's denote $f=\sum_i [\![ f_i ]\!]$. For any $e_h \in \Lambda$, we define $g_{h, k} \coloneqq \sum_i (f_i \cdot e_h)^k$. According to Theorem \ref{thm:lebesgue}, we have $g_{h, k}(x) = \lim_{r\rightarrow 0} \fint_{B_r(x)} g_{h,k}(y)dy$. The expression $\fint_{B_r(x)} g_{h,k}(y)dy$ is a continuous function with respect to $x$ since $f \in W^{1,2}(\Omega, \mathcal{A}_Q(\mathbb{R}^n))$. Therefore, $g_{h, k}(x)$ is the pointwise limit of a sequence of continuous functions.

This leads to the conclusion that $g_{h, k}$ is a Baire function, implying that its set of discontinuity points forms a meager set. Since the union of a finite number of meager sets is itself a meager set, and since the roots of a polynomial continuously depend on its coefficients, we can infer that $f$ is continuous except on a meager set.
\end{proof}


\section{A connection to generalized Campanato-Morrey spaces}

In this section, we outline a potential approach to establishing the continuity of Dir-stationary $Q$-valued functions. We introduce generalized Campanato-Morrey spaces tailored to our problem.

The improved estimate
\begin{equation}\label{eq:improved}
\int_{B_r(x)} |Df|^2 \le C r^{m-2} \frac{1}{\log \frac{1}{r}}.
\end{equation}
motivates us to define the following generalized Campanato spaces and Morrey spaces.
\begin{definition}
Let $\Omega \subset \mathbb{R}^m$ be a $C^1$-domain. Set $\Omega(x_0, \rho) \coloneqq \Omega \cap B_\rho (x_0)$.
For every $1\le p \le +\infty$, $\lambda, \kappa \ge 0$ define the generalized Morrey space $L^{p, \lambda, \kappa}(\Omega)$

$$L^{p,\lambda, \kappa } (\Omega) \coloneqq \bigl\{ u \in L^p (\Omega ) :  \sup_{x_0 \in \Omega, 0<\rho<\frac{1}{2}} (\log{\frac{1}{\rho}})^\kappa \rho^{-\lambda} \int_{\Omega (x_0, \rho )} |u |^p dx < \infty \bigl\},$$
endowed with the norm defined by
$$||u||^p_{L^{p, \lambda, \kappa}} \coloneqq \sup_{x_0 \in \Omega, 0<\rho<\frac{1}{2}} \rho^{-\lambda}  (\log{\frac{1}{\rho}})^\kappa \int_{\Omega(x_0, \rho)} |u|^p dx$$
and the generalized Campanato space $\mathcal{L}^{p, \lambda, \kappa}(\Omega)$

$$\mathcal{L}^{p,\lambda, \kappa } (\Omega) \coloneqq \bigl\{ u \in L^p (\Omega ) :  \sup_{x_0 \in \Omega, 0<\rho<\frac{1}{2}} (\log{\frac{1}{\rho}})^\kappa \rho^{-\lambda} \int_{\Omega (x_0, \rho )} |u-u_{x_0, \rho} |^p dx < \infty \bigl\}.$$
where $u_{x_0, \rho} \coloneqq \fint_{\Omega(x_0, \rho)} udx$. We give the Campanato space $\cL^{p, \lambda, \kappa}(\Omega)$ the seminorm
$$[u]^p_{p, \lambda, \kappa} \coloneqq \sup_{x_0\in \Omega, 0<\rho <\frac{1}{2}} \rho^{-\lambda}  (\log{\frac{1}{\rho}})^\kappa \int_{\Omega(x_0, \rho)} | u - u_{x_0, \rho}|^p dx$$
and the norm
$$||u||_{\cL_{p, \lambda, \kappa}(\Omega)} \coloneqq [u]_{p, \lambda, \kappa} + ||u||_{L^p(\Omega)}.$$
\end{definition}

In this viewpoint,  Proposition \ref{lemma:decay} is restated as follows.
\begin{proposition}\label{lemma:morrey}
Let $ f \in W^{1,2}(\Omega, \mathcal{A}_Q ( \mathbb{R}^n ))$ be a Dir-stationary $Q$-valued function. Then, $|Df| \in {L}^{2, m-2, 1}(\Omega')$ and $\xi \circ f \in \mathcal{L}^{2, m, 1}(\Omega')$ for every  $\Omega' \Subset \Omega$.
\end{proposition}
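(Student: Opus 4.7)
The plan is to read off both assertions from Proposition \ref{lemma:decay}: the Morrey membership of $|Df|$ is essentially just a rephrasing of (\ref{improveddecay}), while the Campanato membership of $\xi \circ f$ reduces to the Morrey estimate via the Poincar\'{e} inequality combined with the Lipschitz property of $\xi$ provided by Theorem \ref{lemma:embedding}.

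First I would fix $\Omega' \Subset \Omega$, let $d := \mathrm{dist}(\Omega', \partial \Omega) > 0$, and set $r_* := \frac{1}{2}\min(d, 1)$. For any $x_0 \in \Omega'$ and $0 < \rho \le r_*$, Proposition \ref{lemma:decay} applied at $x_0$ gives
$$\int_{\Omega'(x_0, \rho)} |Df|^2 \;\le\; \int_{B_\rho(x_0)} |Df|^2 \;\le\; \frac{C\, \rho^{m-2}}{\log(1/\rho)},$$
which is exactly the Morrey bound. For the remaining range $r_* < \rho < \frac{1}{2}$ (possibly empty), the prefactor $\rho^{-(m-2)}\log(1/\rho)$ is bounded by a constant depending only on $m$ and $r_*$, while the integral is at most $\|Df\|_{L^2(\Omega)}^2 < \infty$. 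Thus $|Df| \in L^{2,m-2,1}(\Omega')$.

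For the Campanato part I would invoke the standard chain rule for Lipschitz composition with Sobolev $Q$-valued maps, which gives $|D(\xi\circ f)| \le \mathrm{Lip}(\xi)\cdot |Df|$ almost everywhere. Applying the Poincar\'{e} inequality on $\Omega'(x_0,\rho)$ and then the Morrey estimate just established,
$$\int_{\Omega'(x_0,\rho)} \big| \xi\circ f - (\xi\circ f)_{x_0,\rho} \big|^2 \;\le\; C\rho^2 \int_{\Omega'(x_0,\rho)} |Df|^2 \;\le\; \frac{C\,\rho^m}{\log(1/\rho)},$$
which is exactly the Campanato bound, so $\xi \circ f \in \mathcal{L}^{2,m,1}(\Omega')$.

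Since the result is essentially a reformulation of Proposition \ref{lemma:decay}, I do not expect any step to present a significant obstacle. The only mildly delicate point is the uniformity of the Poincar\'{e} constant for the family of sets $\{\Omega'(x_0,\rho)\}$: when $x_0$ approaches $\partial \Omega'$, this set is not a full ball and has varying geometry as $\rho$ shrinks. Because $\Omega'$ is a $C^1$ domain (as built into the definition of the generalized spaces), a standard rescaling/compactness argument yields a uniform Poincar\'{e} constant, so this step is routine.
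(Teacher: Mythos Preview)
Your proposal is correct and follows the same route as the paper: the Morrey membership of $|Df|$ is read off directly from the improved decay estimate, and the Campanato membership of $\xi\circ f$ then follows by Poincar\'e together with the Lipschitz bound $|D(\xi\circ f)|\le \mathrm{Lip}(\xi)\,|Df|$. The paper's proof is a one-liner (``apply the Poincar\'e inequality to the improved estimate''), and your write-up simply supplies the routine details (handling the range $r_*<\rho<\tfrac12$ and the uniform Poincar\'e constant on $\Omega'(x_0,\rho)$) that the paper leaves implicit.
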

\begin{proof}
We apply the Poincar\'e's inequality to the inequality (\ref{eq:improved}).
\end{proof}
Unfortunately, this decay is not enough to give us the continuity of $f$. In general, we need a stronger estimate to prove the continuity.

\begin{proposition}
Suppose $u \in \mathcal{L}^{2,m, \kappa }(\Omega)$ for $\kappa>2$. Then $u$ is  continuous. Moreover, $|u(x)- u(y)| \le C  \frac{[u]_{2, m, \kappa}}{(\log{\frac{1}{|x-y|}})^{\kappa/2-1}}$ for $|x-y|<1/2$.
\end{proposition}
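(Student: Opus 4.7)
The plan is to adapt the classical Campanato--Morrey embedding by telescoping averages over dyadic balls, with the geometric decay replaced by polynomial decay in $\log(1/r)$.

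First, I would estimate the oscillation of averages at a fixed point. Fix $x_0 \in \Omega$ and set $\rho_j = 2^{-j}\rho$ for $\rho < 1/2$. The inclusion $\Omega(x_0, \rho_{j+1}) \subset \Omega(x_0, \rho_j)$ together with Jensen's inequality gives
\[ |u_{x_0, \rho_{j+1}} - u_{x_0, \rho_j}|^2 \le \frac{C}{\rho_j^m}\int_{\Omega(x_0, \rho_j)}|u - u_{x_0, \rho_j}|^2 \le \frac{C [u]_{2,m,\kappa}^2}{(\log(1/\rho_j))^\kappa}, \]
where the first inequality uses the lower bound $|\Omega(x_0, \rho)| \gtrsim \rho^m$ guaranteed by the $C^1$-regularity of $\partial\Omega$. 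Taking square roots and substituting $\log(1/\rho_j) = j\log 2 + \log(1/\rho)$, the resulting series is comparable to $\int_0^\infty (t + \log(1/\rho))^{-\kappa/2}\,dt$, which converges precisely when $\kappa/2 > 1$ (this is exactly where $\kappa > 2$ enters) and is bounded by a multiple of $(\log(1/\rho))^{-(\kappa/2-1)}$. The sequence $\{u_{x_0, \rho_j}\}$ is therefore Cauchy; I define $u(x_0)$ as its limit (which agrees a.e.\ with the original representative by Lebesgue differentiation) and obtain
\[ |u(x_0) - u_{x_0, \rho}| \le C [u]_{2,m,\kappa}(\log(1/\rho))^{-(\kappa/2 - 1)}. \]

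Second, to compare values at distinct centers $x, y$ with $|x-y| = \rho$ (shrinking the threshold to $1/4$ for convenience), I would bridge using the inclusion $B_\rho(x) \subset B_{2\rho}(y)$: another application of Jensen's inequality gives
\[ |u_{x, \rho} - u_{y, 2\rho}|^2 \le \frac{C}{\rho^m}\int_{\Omega(y, 2\rho)}|u - u_{y, 2\rho}|^2 \le C[u]_{2,m,\kappa}^2 (\log(1/\rho))^{-\kappa}. \]
The triangle inequality
\[ |u(x) - u(y)| \le |u(x) - u_{x, \rho}| + |u_{x, \rho} - u_{y, 2\rho}| + |u_{y, 2\rho} - u(y)|, \]
combined with the estimate from the first step applied at both centers, yields the claimed modulus of continuity; in particular the $(\log(1/\rho))^{-\kappa/2}$ bound on the middle term is dominated by the $(\log(1/\rho))^{-(\kappa/2-1)}$ bounds on the outer two.

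The main obstacle is essentially a Dini-type summability question: the threshold $\kappa > 2$ emerges exactly from the convergence of $\int^\infty t^{-\kappa/2}\,dt$, and no softer argument produces a better exponent. A minor technical point is the uniform lower bound $|\Omega(x_0, \rho)| \gtrsim \rho^m$ near $\partial\Omega$; this is where the $C^1$-regularity of the domain is invoked. Otherwise the argument is a routine modification of the Campanato embedding, with the key observation that logarithmic decay $\int_{B_r}|u-u_{B_r}|^2 \lesssim r^m (\log(1/r))^{-\kappa}$ plays the role that H\"older-type decay $\lesssim r^{m+2\alpha}$ plays in the classical setting.
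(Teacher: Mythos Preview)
Your proposal is correct and follows essentially the same Campanato-style telescoping argument as the paper: dyadic averages at a fixed center, summability of $(\log(1/\rho_j))^{-\kappa/2}$ forcing $\kappa>2$, Lebesgue differentiation to identify the limit, and a three-term bridge between nearby centers. The only cosmetic differences are that you compare the series to an integral rather than to $\sum k^{-\kappa/2}$, and you bridge via the inclusion $B_\rho(x)\subset B_{2\rho}(y)$ (using $u_{x,\rho}$ and $u_{y,2\rho}$) whereas the paper bridges via the intersection $\Omega(x,2r)\cap\Omega(y,2r)$ (using $u_{x,2r}$ and $u_{y,2r}$); both are standard and equivalent.
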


\begin{proof}
For $x_0 \in \Omega$, $0<r<1$, $r_k \coloneqq \frac{r}{2^k}$, we have 
$$\int_{\Omega (x_0, r_k)} |u-u_{x_0, r_k} |^2 dx \le [u]_{2, m, \kappa}^2 r_k^m \frac{1}{(\log{\frac{1}{r_k}})^\kappa}$$
for some $C$ by the definition of $ \mathcal{L}^{2,m, \kappa }(\Omega)$. By the triangle inequality,
\begin{equation}\label{eq:cauchy}
|u_{x_0, r_k} - u_{x_0, r_{k+1}}| \le C \frac{[u]_{2, m, \kappa}}{k^\frac{\kappa}{2}}
\end{equation} 
and for  $k<h$,
\begin{equation}
|u_{x_0, r_k} - u_{x_0, r_{h}}| \le C \frac{[u]_{2, m, \kappa}}{k^{\frac{\kappa}{2}-1}}
\end{equation}
 which implies that $\{u_{x_0, r_k}\}$ is a Cauchy sequence as $\kappa>2$. Set
$$\tilde{u}(x_0) \coloneqq \lim_{h \rightarrow +\infty} u_{x_0, r_{k}}.$$
It is easy to see that this limit doesn't depend on the choice of $r$. From the differentiation theorem of Lebesgue we know that $u_{x, \rho} \rightarrow u(x)$ in $L^1 (\Omega)$, so that $u= \tilde u$ almost everywhere. Taking the limit as $h \rightarrow +\infty$ in (\ref{eq:cauchy}) we conclude
\begin{equation}\label{eq:limit}
|u_{x, r} - u(x)| \le C \frac{[u]_{2, m, \kappa}}{(\log{\frac{1}{r}})^{\kappa/2-1}}.
\end{equation}
from which we see that the convergence of $u_{x, r}$ to $\tilde{u}(x)$ is uniform. As $u_{x, r}$ is continuous with respect to $x$ by the absolute continuity of the Lebesgue integral, $\tilde{u}(x)$ is also continuous by the uniform limit theorem.

Take $x, y \in \Omega$, and $r \coloneqq |x-y|$. Suppose $r<1/2$. We estimate
$$|u(x)- u(y)| \le |u_{x, 2r} - u(x)| + |u_{x, 2r}, - u_{y, 2r}| + |u_{y, 2r} - u(y)|.$$
The first and the third term is estimated by (\ref{eq:limit}), and
\begin{align*}
|u_{x, 2r} - u_{y, 2r}|& \le  \frac{\int_{\Omega(x, 2r)} |u(z)-u_{x, 2r}| dz +\int_{\Omega(y, 2r)} |u(z)-u_{y, 2r}| dz}{|\Omega(x, 2r) \cap \Omega(y, 2R)|}  \\
& \le C (\fint_{\Omega(x, 2r)} |u(z)-u_{x, 2r}| dz +\fint_{\Omega(y, 2r)} |u(z)-u_{y, 2r}| dz )\\
& \le C \frac{[u]_{2, m, \kappa}}{(\log{\frac{1}{r}})^{\kappa/2}}
\end{align*}
by the Cauchy-Schwarz inequality. Combining above inequalities, we get
$$|u(x)- u(y)| \le C  \frac{[u]_{2, m, \kappa}}{(\log{\frac{1}{|x-y|}})^{\kappa/2-1}}$$
for $|x-y|<1/2$.
\end{proof}

We conclude this section by observing that an improvement of Proposition \ref{lemma:morrey} is sufficient to prove the continuity of stationary $Q$-valued functions.
\begin{proposition}
Let $ f \in W^{1,2}(\Omega, \mathcal{A}_Q ( \mathbb{R}^n ))$ be a Dir-stationary $Q$-valued function. Assume $|Df| \in \mathcal{L}^{2,m-2, \kappa }(\Omega)$ for some $\kappa >1$. Then $f$ is continuous in $\Omega$.
\end{proposition}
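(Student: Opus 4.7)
The plan is to transfer the decay hypothesis from $|Df|$ to the Euclidean realization $u := \xi \circ f$ via Poincar\'e's inequality, and then invoke the preceding Campanato continuity criterion on $u$. Since $\xi$ is a Lipschitz embedding (Theorem~\ref{lemma:embedding}), continuity of $u$ is equivalent to continuity of $f$ as an $\mathcal{A}_Q(\mathbb{R}^n)$-valued map.

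First, I would convert the Campanato hypothesis into a Morrey-type control on $|Df|$. Because $\lambda = m-2 < m$, this can be done by dyadic telescoping of the ball averages $g_k := |Df|_{x_0, 2^{-k}}$: the Campanato estimate yields $|g_k - g_{k+1}| \lesssim 2^k / k^{\kappa/2}$, and summing together with a trivial bound on $g_0$ gives $g_k \lesssim 2^k / k^{\kappa/2}$, so that
$$\int_{B_\rho(x_0)} |Df|^2 \, dx \le C \rho^{m-2} \bigl(\log \tfrac{1}{\rho}\bigr)^{-\kappa}$$
on any $\Omega' \Subset \Omega$ and for small $\rho$.

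Next, apply Poincar\'e's inequality to $u = \xi \circ f$. Since $|Du| \le \operatorname{Lip}(\xi)\, |Df|$ almost everywhere,
$$\int_{B_\rho(x_0)} |u - u_{x_0, \rho}|^2 \, dx \le C \rho^2 \int_{B_\rho(x_0)} |Df|^2 \, dx \le C \rho^m \bigl(\log \tfrac{1}{\rho}\bigr)^{-\kappa},$$
so $u \in \mathcal{L}^{2, m, \kappa}(\Omega')$ for every $\Omega' \Subset \Omega$. Invoking the preceding Campanato continuity proposition on $u$ then yields the desired conclusion.

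The main obstacle will be matching the exponent: the preceding continuity proposition, as proved, requires the Campanato exponent to exceed $2$, whereas the chain above produces only $\kappa > 1$. I would try to close this factor-of-two gap by leveraging two extra pieces of structure not used in that general proposition. First, Theorem~\ref{thm:lebesgue} already supplies the pointwise limit $u(x) = \lim_{r \to 0} u_{x, r}$ at every $x$, so pointwise convergence of the dyadic averages is automatic and only a \emph{uniform-in-$x$} modulus of convergence $u_{x,r} \to u(x)$ needs to be extracted from the square-summable (but not absolutely summable) dyadic increments available for $\kappa > 1$. Second, the Dir-stationary structure itself allows a bootstrap on the proof of Proposition~\ref{lemma:decay}: the $L^\infty$ bound $M$ on $f$ used there can be replaced by a local oscillation of $f$ on $B_r$, which shrinks with $r$ under the improved hypothesis, thereby sharpening the differential inequality $E'(r) \ge (C/r) E(r)^2$ and upgrading the decay of $E$ into a regime where continuity follows directly from the Campanato embedding.
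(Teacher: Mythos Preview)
Your route---pass to $u=\xi\circ f$, use Poincar\'e, and invoke the preceding Campanato embedding---is natural but, as you yourself note, it lands you at the threshold $\kappa>2$ rather than $\kappa>1$, and neither of your proposed fixes actually closes that gap. Fix~1 asks for a uniform modulus of convergence of $u_{x,r}\to u(x)$ when only $\sum_k k^{-\kappa}<\infty$ is available; knowing from Theorem~\ref{thm:lebesgue} that the limit exists pointwise does not manufacture the missing absolute summability $\sum_k k^{-\kappa/2}$, and without that the telescoping argument gives no uniform bound. Fix~2 is circular: to replace $M$ by the oscillation $\sup_{B_r}|f-c|$ in the proof of Proposition~\ref{lemma:decay} you need an $L^\infty$ (not $L^2$) oscillation decay, which is precisely the continuity you are after. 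So as written the proposal is incomplete at exactly the point that matters.

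The paper avoids this loss of a factor $2$ by \emph{not} passing through the generic Campanato space for $\xi\circ f$ at all. Instead it works with the scalar power sums $g_k=\sum_i (f_i^1)^k$, which by the outer variation (equation~\eqref{eq:kweak}) satisfy $\Delta g_k = k(k-1)\sum_i |Df_i^1|^2 (f_i^1)^{k-2}\ge 0$ weakly. The spherical mean-value identity then gives
\[
\fint_{\partial B_r(x_0)} g_k - g_k(x_0) \;=\; C(m)\int_0^r \rho^{1-m}\!\int_{B_\rho(x_0)}\Delta g_k
\;\le\; C\,[|Df|]_{2,m-2,\kappa}^2 \int_0^r \frac{d\rho}{\rho\,(\log\tfrac{1}{\rho})^{\kappa}},
\]
and the last integral is finite precisely when $\kappa>1$. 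This yields uniform convergence of the spherical averages to $g_k$, hence continuity of each $g_k$, and then continuity of $f$ via the fact that roots depend continuously on power sums. The point is that the mean-value formula integrates the \emph{energy itself} against $d\rho/\rho$, whereas the Campanato telescoping first takes a square root (costing the factor $2$ in the exponent) and then sums. Exploiting the weak PDE for $g_k$ is the missing idea in your proposal.
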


\begin{proof}
By Proposition \ref{lemma:boundedness}, we may assume $|f|<M$. For a positive integer $k$, let $g_k = \sum_i (f_i^1)^k$ as in the equation (\ref{eq:kweak}). Choose $x_0 \in \Omega$ and $0<r<1$.
As $x_0$ is a Lebesgue point of $g_k$,
\begin{align*}
\fint_{\partial B_r(x_0)} g_k(y)  -g_k(x_0) & = \int_0^r \fint_{\partial B_\rho(x_0)} \partial_\nu g_k \\
& = C(m) \int_0^r \frac{1}{\rho^{m-1}} \int_{B_\rho (x_0)}\Delta g_k \\
& = C(m) \int_0^r \frac{1}{\rho^{m-1}} \int_{B_\rho (x_0)}k(k-1)\sum_i |Df_i^1|^2 (f_i^1)^{k-2} \\
& \le C(m, k) M^{k-2} \int_0^r \frac{1}{\rho^{m-1}} \int_{B_\rho (x_0)}\sum_i |Df_i^1|^2 \\
& \le C(m, k) M^{k-2} [|Df|]_{2, m-2, \kappa} \int_0^r  \frac{1}{\rho (\log{\frac{1}{\rho}})^\kappa}
\end{align*}
$\int_0^r  \frac{1}{\rho (\log{\frac{1}{\rho}})^\kappa}$ is integrable as $\kappa>1$, and $\fint_{\partial B_r(x_0)} g_k(y)$ is continuous as $g_k \in W^{1,2}(\Omega)$.
By the above estimate, $\fint_{\partial B_r(x_0)} g_k(y)$ converges uniformly to $g(x_0)$ as $r \rightarrow 0$. The uniform limit theorem implies that $g_k$ is continuous. 

Therefore, $\sum_i (f_i^1)^k$ is continuous for every $k$. As roots of a polynomial depend continuously on the coefficients, $\{f_i^1 \}_{1\le i \le Q}$ is continuous. From the constuction of $\xi$, $\xi \circ f$ is continuous and so is $f$.

\end{proof}

\end{document}